\numberwithin{equation}{section}
\newtheorem{theorem}{Theorem}[section]
\newtheorem{lemma}[theorem]{Lemma}
\newtheorem{proposition}[theorem]{Proposition}
     \newtheorem{question}{Question}
      \theoremstyle{definition}
     \newtheorem*{definition}{Definition}
     \newtheorem{example}[theorem]{Example}
     \theoremstyle{remark}
     \newtheorem{remark}[theorem]{Remark}
\newcommand{\Aut}{\mathop{\mathrm{Aut}}}
\newcommand\blfootnote[1]{%
  \begingroup
  \renewcommand\thefootnote{}\footnote{#1}%
  \addtocounter{footnote}{-1}%
  \endgroup
}
 \definecolor{mycolor}{rgb}{0.55,0.0,0.16}
  \definecolor{myred}{rgb}{0.6,0.0,0.16}
  \definecolor{mygreen}{rgb}{0.0,0.6,0.16}
  \definecolor{myviolet}{rgb}{1,0,1}
\begin{document}

  \title[On groups with large verbal quotients]{On groups with large verbal quotients}

\author[Lisi and Sabatini]{Francesca Lisi and Luca Sabatini}

\address{Francesca Lisi, Universit\`a degli Studi di Firenze} 
\email{francesca.lisi@unifi.it}

\address{Luca Sabatini, Alfr\'ed R\'enyi Institute of Mathematics} 
\email{sabatini@renyi.hu, sabatini.math@gmail.com}

\subjclass[2020]{Primary 20F99}
\keywords{$w$-maximal group, verbal subgroup}        
	\maketitle

	  \begin{abstract}
	  Let $w=w(x_1,...,x_n)$ be a word, i.e. an element of the free group $F = \langle x_1,...,x_n \rangle$.
	The verbal subgroup $w(G)$ of a group $G$ is the subgroup generated by the set $\{ w(x_1,...,x_n) : x_1,...,x_n \in G \}$ of all $w$-values in $G$.  
	  Following J. Gonz\'alez-S\'anchez and B. Klopsch,
	   a group $G$ is $w$-maximal if $|H:w(H)| < |G:w(G)|$ for every $H<G$.
	  In this paper we give new results on $w$-maximal groups,
	  and study the weaker condition in which the previous inequality is not strict.
	 Some applications are given: for example, if a finite group has a solvable (resp. nilpotent)
	 section of size $n$, then it has a solvable (resp. nilpotent) subgroup of size at least $n$.
          \end{abstract}
	
	\blfootnote{This work is dedicated to our professor Carlo Casolo.}

	\begin{section}{Introduction}
	
	Let $p$ be a prime.
	In~\cite{1969Thompson}, J.G. Thompson observed that, if $G$ is a finite $p$-group such that $|H:[H,H]|<|G:[G:G]|$ for every $H<G$,
	then the nilpotency class of $G$ is at most $2$.
	In~\cite{2022Sab}, the second author remarks that the previous strict inequality provides nilpotency,
	and so the hypothesis that $G$ is a $p$-group can be removed.
	With a more general approach, J. Gonz\'alez-S\'anchez and B. Klopsch introduced the concept of $w$-maximal group~\cite{2011GK}.
	Let $w=w(x_1,...,x_n)$ be a word, i.e. an element of the free group $F = \langle x_1,...,x_n \rangle$.
	The verbal subgroup $w(G)$ of a group $G$ is the subgroup generated by the set $\{ w(x_1,...,x_n) : x_1,...,x_n \in G \}$ of all $w$-values in $G$.
	
	\begin{definition}
	A group $G$ is {\itshape $w$-maximal} if $|H:w(H)| < |G:w(G)|$ for every $H<G$.
	\end{definition}
	
	They focus on finite $p$-groups, and prove $w(G) \leqslant Z(G)$ for a large class of words.
	The present article has two goals: give new and more general results on $w$-maximal groups,
	and study the following weaker condition.
	
	\begin{definition}
	A group $G$ is {\itshape weakly $w$-maximal} if $|H:w(H)| \leq |G:w(G)|$ for every $H \leqslant G$.
	\end{definition}
	
	As in~\cite{2011GK}, the bulk of the paper is about finite groups.
	We show that this is not really restrictive, because every residually finite weakly $w$-maximal group is in fact finite in many important cases.
	The general approach naturally leads to the following useful result.
	
	\begin{proposition} \label{propSolvNilpSec}
	 Let $G$ be a finite group.
	 If $G$ has a solvable (resp. nilpotent) section of size $n$, then it has a solvable (resp. nilpotent) subgroup of size at least $n$.
	 \end{proposition}
	
	Let $n \geq 2$ and $\gamma_n := [x_1,...,x_n]$,
	so that $\gamma_2$-maximal groups are those involved in the original Thompson's theorem.
	Unexpectedly, the hypothesis of weak $\gamma_2$-maximality is sufficient to obtain nilpotency,
	but does not provide any bound on the nilpotency class.
	Moreover, the same is true when $n=3$.
	(We stress that the properties of being weakly $\gamma_{n}$-maximal
	and weakly $\gamma_{n+1}$-maximal are independent.)
	
	\begin{theorem} \label{thMain}
	Let $n=2,3$.
	Every weakly $\gamma_n$-maximal finite group is the direct product of weakly $\gamma_n$-maximal finite $p$-groups.
	\end{theorem}
	 
	In some sense, Theorem \ref{thMain} is the best possible, because it fails for each $n \geq 4$ (see Example \ref{exContro}).
	 Our proof does not require the classification of the non-abelian finite simple groups.\\
	 In Section \ref{sec3}, we face the problem of studying the $w$-maximal groups more in depth.
	We prove the following general theorem.
	 
	  \begin{theorem} \label{thIso}
	The properties of being weakly $w$-maximal and $w$-maximal are invariant under $w$-isologism.
	\end{theorem}
	 
	 We refer to Section \ref{sec3} for the detailed statement.
	 As we will explain, Theorem \ref{thIso} helps to better understand weakly $w$-maximal and $w$-maximal groups,
	 especially when $w=\gamma_2$.
	 We conclude with a short list of questions.
	
	\end{section}

	\vspace{0.1cm}
	\begin{section}{General results}
	
	We start with some basic facts concerning verbal subgroups.
	
	\begin{lemma} \label{lemHelp}
	Let $w$ be a word, and $N \lhd G$.
	Then
	\begin{itemize}
	   \item[(i)] $w(G/N)=Nw(G)/N$;
	  \item[(ii)] $|G:w(G)| \leq |N:w(N)| |(G/N):w(G/N)|$.
	\end{itemize}
	\end{lemma}
	\begin{proof}
	(i) is trivial, and for (ii) we use the equality
	\begin{align*} 
|G:w(G)| & \> = \> 
|G:Nw(G)| |Nw(G):w(G)| \\ & \> = \> 
   |(G/N):w(G/N)| |N : N \cap w(G)| \> ,
\end{align*}
	and the fact that $w(N) \leqslant N \cap w(G)$.
	\end{proof}
	 
	 \begin{remark} \label{remDP}
	 If $A \times B$ is the direct product of two groups, then $w(A \times B)=w(A) \times w(B)$.
	 \end{remark}
	 
	 We will frequently use the following computation:
	 if $G=Hw(G)$ for some $H \leqslant G$, then
	 \begin{align} \label{eqTop}
|G:w(G)| & \> = \> \nonumber 
|Hw(G):w(G)| \\ & \> = \>  
|H:H \cap w(G)| \\ & \> \leq \> \nonumber 
  |H:w(H)| \> . \nonumber 
\end{align}

	 \begin{subsection}{Weakly $w$-maximal groups}
	 
	 The next are important properties of weakly $w$-maximal and $w$-maximal groups.
	 It is easy to see that if $G$ is $w$-maximal and $N$ is a normal subgroup contained in $w(G)$, then $G/N$ is $w$-maximal
	 (this is the content of~\cite[Lemma 2.1(b)]{2011GK}).
	 Indeed, a much more general statement is true.
	
	\begin{lemma} \label{lemQuot}
	Every quotient of a weakly $w$-maximal group (resp. $w$-maximal) is weakly $w$-maximal (resp. $w$-maximal).
	\end{lemma}
	\begin{proof}
	Let $G$ be weakly $w$-maximal, and let $N \lhd G$.
	If $N \leqslant H <G$, then Lemma \ref{lemHelp}(i) and some computations show that
	\begin{align*}
|(H/N):w(H/N)| & \> = \> 
|H:Nw(H)| \\ & \> = \> 
\frac{|H:w(H)|}{|N : N \cap w(H)|} \\ & \> \leq \> 
 \frac{|G:w(G)|}{|N : N \cap w(G)|} \\ & \> = \> 
  |G:Nw(G)| \\ & \> = \> 
  |(G/N):w(G/N)| \> .
\end{align*}
If $G$ is $w$-maximal, then the inequality in the middle is strict, and so $G/N$ is $w$-maximal.
	\end{proof}
	
	\begin{lemma} \label{lemDP}
	A direct product $A \times B$ is weakly $w$-maximal (resp. $w$-maximal)
	if and only if both $A$ and $B$ are weakly $w$-maximal (resp. $w$-maximal).
	\end{lemma}
	\begin{proof}
	The ``only if'' part follows from Lemma \ref{lemQuot}.
	For the ``if'' part, let $A$ and $B$ be weakly $w$-maximal, and set $G= A \times B$.
	Let $H < G$.
	Let $\pi : H \rightarrow B$ be the natural projection, and let $H_A$ be the kernel of $\pi$.
	Then, from Lemma \ref{lemHelp}(ii) and the isomorphism theorem, we have
	\begin{equation} \label{eqDP}
	|H:w(H)| \leq |H_A :w(H_A)| |\pi(H):w(\pi(H))| .
	\end{equation}	 
	Since $H_A$ is isomorphic to a subgroup of $A$, we have
	$$ |H:w(H)| \leq |A:w(A)||B:w(B)| = |G:w(G)| , $$
	as desired.
	Finally, let $A$ and $B$ be $w$-maximal groups.
	Now $H$ is proper in $G$, so at least one among the strict inclusions $H_A<A$ and $\pi(H)<B$ is true.
	Hence, we obtain $|H:w(H)|<|G:w(G)|$ from (\ref{eqDP}).
	\end{proof}
	 
	 \end{subsection}

	 \begin{subsection}{$w$-maximal groups}
	 
	 The next simple observation makes $w$-maximal groups a useful tool in the study of finite groups in general.
	 Special cases are implicitly used in~\cite{2011GK,1973Laffey,2022Sab}.
	 
	 \begin{lemma} \label{lemTopApp}
	 Let $G$ be a finite group and let $w$ be a word.
	 Then there exists a $w$-maximal subgroup $H \leqslant G$ such that $|H| \geq |G:w(G)|$.
	 \end{lemma}
	 \begin{proof}
	 It is sufficient to choose $H$ as minimal, with respect to inclusion, with the property that $|H:w(H)| \geq |G:w(G)|$.
	 \end{proof}
	 
	 \begin{remark}
	The main result of~\cite{2022Sab} really gives that every finite group $G$ contains a $\gamma_2$-maximal subgroup
	of size at least $|G|^{O(1)/ \log\log|G|}$.
	It is an important open question whether this can be improved to $|G|^{O(1)}$ for solvable groups~\cite{1999Mann,1997Pyber}.
	\end{remark}
	
	 
	 We emphasize that there exist applications of $w$-maximal groups that are not based solely on Lemma \ref{lemTopApp}.
	 For example, in~\cite{2014GZ} the authors use $w$-maximal groups to give a characterization of powerful $p$-groups, for every $p \geq 5$.
	 We use $\Phi(G)$ to denote the Frattini subgroup of $G$, that is, the intersection of the maximal subgroups of $G$.
	 
	 \begin{lemma} \label{lemFrat}
	 Let $G$ be a $w$-maximal group. Then $w(G) \leqslant \Phi(G)$.
	 \end{lemma}
	 \begin{proof}
	  Suppose that $w(G) \nleqslant \Phi(G)$.
	  Then there exists a proper subgroup $H<G$ which supplements $w(G)$.
	 Therefore, (\ref{eqTop}) gives that $G$ is not $w$-maximal.
	 \end{proof}
	   
	  Following~\cite{2011GK}, a word $w$ is {\itshape interchangeable} in a finite $p$-group $G$ if, for every $N \lhd G$, we have
	  \begin{equation} \label{eqInter}
	  [w(N),G)] \> \leqslant \> [N,w(G)] [w(G),G]^p [w(G),G,G] . 
	  \end{equation}
	  As a consequence of standard commutator formulas,
	  $\gamma_n = [x_1,...,x_n]$ is interchangeable in every finite $p$-group, for each $n \geq 2$ (see~\cite[Lemma 3.1]{2011GK}).
	 In~\cite[Theorem 3.3]{2011GK}, the authors have noticed
	  that (\ref{eqInter}) is the key ingredient to apply Thompson's original argument~\cite{1969Thompson}.
	  
	  \begin{theorem}[Gonz\'alez-S\'anchez and Klopsch] \label{thGK}
	  Let $G$ be a $w$-maximal finite $p$-group. If $w$ is interchangeable in $G$, then $w(G) \leqslant Z(G)$.
	  \end{theorem}
	 
	 \begin{lemma} \label{lemNilpInter}
	 Let $G$ be a $w$-maximal finite group. If $G/w(G)$ is nilpotent, then $G$ is nilpotent.
	 If $G/w(G)$ has nilpotency class $c$, and $w$ is interchangeable in $G$, then $G$ has nilpotency class at most $c+1$.
	 \end{lemma}
	 \begin{proof}
	 From Lemma \ref{lemFrat}, $G/\Phi(G)$ is nilpotent.
	 Then, it is well known that $G$ itself is nilpotent~\cite[2.2.5(b)]{2003KS},
	 i.e. the direct product of $w$-maximal $p$-groups $G_1,...,G_r$ (Lemma \ref{lemDP}).
	 From Remark \ref{remDP}, $G_i/w(G_i)$ is of class at most $c$ for every $i=1,...,r$,
	 and Theorem \ref{thGK} finishes the proof.
	 \end{proof}
	 
	 \end{subsection}

	  \begin{subsection}{Remarks on infinite $w$-maximal groups}
	 
	 In this subsection, let $G$ be a residually finite infinite group, and let $w$ be a word.
	 If $|G:w(G)| = \infty$, then $G$ is weakly $w$-maximal trivially.
	However, it cannot be $w$-maximal: whenever $H \leqslant G$ has finite index, we have $|H:w(H)| \geq \frac{|G:w(G)|}{|G:H|} = \infty$.
	So the interesting case is when $|G:w(G)|<\infty$.
	Here it is important to distinguish whether $w$ is a {\itshape commutator word} (i.e. $w \in [F,F]$) or not.
	The following result improves the direction (1) $\Rightarrow$ (2) of~\cite[Theorem 2.2]{2011GK}, and the proof is much easier.
	We remark that $\gamma_n$ is a commutator word for every $n \geq 2$.
	
	\begin{proposition}
	Let $w$ be a commutator word, and suppose $|G:w(G)|=k<\infty$.
	If $G$ is a residually finite weakly $w$-maximal group, then $G$ is finite.
	Moreover, $|G| \leq k^{O(\log \log k)}$.
	\end{proposition}
	\begin{proof}
	Let $N \lhd G$ be a normal subgroup of finite index $n$, and let $\widetilde{G}=G/N$.
	By~\cite[Theorem 1]{2022Sab}, $\widetilde{G}$ contains an abelian section of size at least $n^{O(1)/\log\log n}$.
	This provides a subgroup $H \leqslant G$ such that
	\begin{align*}
k & \> \geq \> 
|H:w(H)| \\ & \> \geq \> 
|H : \gamma_2(H)|  \\ & \> \geq \> 
  n^{O(1)/\log\log n}  \> .
\end{align*}
	This also gives $\log\log k \geq O(1) \log\log n$, and then
	$$ k^{\log\log k} \geq n^{O(1) \log\log k/\log\log n} \geq n^{O(1)} . $$
	Since a residually finite infinite group has normal subgroups of finite but arbitrarily large index, and $k$ is fixed,
	we achieve that $G$ is finite.
	Therefore, the desired bound is obtained by choosing $N=1$.
	\end{proof}
	
	If $w$ is not a commutator word,
	then there exists an abelian residually finite infinite weakly $w$-maximal group $G$ such that $|G:w(G)|<\infty$.
	The following construction is taken from~\cite[Proof of Theorem 2.2]{2011GK}.
	Let $p$ be a prime and consider a free $\mathbb{Z}_p$-module $G$, where $\mathbb{Z}_p$ denotes the ring of $p$-adic integers.
	Then $w(G)$ is a non-trivial characteristic subgroup of $G$ and hence $w(G) = p^r G$ for some $r \geq 0$.
	In fact, for all submodules $H$ of $G$ we have $w(H) = p^r H$ and consequently $|H:w(H)| \leq |G:w(G)|$.
	 
	 \end{subsection}

	 \begin{subsection}{Nilpotent and solvable quotients}
	 
	 The following special case of Lemma \ref{lemNilpInter} generalizes both results in~\cite{2011GK} and~\cite{2022Sab},
	 where $G$ is a $p$-group and $n=2$ respectively.
	 
	  \begin{lemma} \label{lemNCTop}
	 Let $n \geq 2$. Every $\gamma_n$-maximal finite group is nilpotent of class at most $n$.
	 \end{lemma}
	  
	  We write $\gamma_\infty(G) := \cap_{n \geq 1} \gamma_n(G)$ to denote the nilpotent residual of $G$.
	 
	 \begin{lemma} \label{lemNRes}
	 Let $G$ be a finite group.
	 If $|H:\gamma_\infty(H)| < |G:\gamma_\infty(G)|$ for every $H < G$, then $G$ is nilpotent.
	 \end{lemma}
	 \begin{proof}
	  If $c$ is the nilpotency class of $G/\gamma_\infty(G)$,
	  then the hypotheses of Lemma \ref{lemNCTop} are satisfied with $n=c+1$.
	 \end{proof}
	 
	 \begin{example} \label{exContro}
	  We stress that Lemma \ref{lemNRes} with ``$\leq$'' is false.
	  Let $V=\mathbb{F}_3 \times \mathbb{F}_3$, 
	  and let $K$ be the Sylow $2$-subgroup of $GL(V)$, namely $K$ is a semidihedral group of order $16$.
	  Let $G=V \rtimes K$.
	  Then $\gamma_\infty(G)=V$,
	  and every nilpotent section of $G$ has order bounded by $|G:\gamma_\infty(G)|=|K|$, but $G$ is not nilpotent.
	  Since the nilpotency class of $K$ is $3$, this example also shows that Theorem \ref{thMain} becomes false for each $n \geq 4$.
	  \end{example}
	  
	 In order to deal with the solvable sections, we report~\cite[Proposition 2.2]{1997Pyber}.
	  A subgroup $H \leqslant G$ is said to be {\itshape intravariant} if every image of
	  $H$ under an automorphism of $G$ is conjugate in $G$ to $H$.
	  
	  \begin{proposition}[Chunikhin] \label{propPyb}
	  Let $G=G_0 \rhd ... \rhd G_r = 1$ be a normal series with each $G_i$ normal in $G$. Let $\overline{G_i} = G_{i-1}/G_i$ denote the factors.
	  Suppose for each $i$ that $\overline{H_i}$ is an intravariant subgroup of $\overline{G_i}$.
	  Then there is a subgroup $H$ of $G$ such that all the non-abelian composition factors of $H$
	  occur as composition factors of some of the groups $\overline{H_i}$, and $|H| \geq \prod_i |\overline{H_i}|$.
	  \end{proposition}
	 
	  A very strong result holds for the solvable residual $\mathcal{O}_\infty(G)$,
	  i.e. the intersection of all members of the derived series.
	 Indeed, in the hypotheses of the following lemma,
	 we are just bounding (in the weak sense) the cardinality of the solvable {\itshape subgroups}.
	 
	 \begin{lemma} \label{lemSRes}
	 Let $G$ be a finite group.
	 If $|H| \leq |G:\mathcal{O}_\infty(G)|$ for every solvable subgroup $H$, then $G$ is solvable.
	 \end{lemma}
	 \begin{proof}
	 Suppose that $N=\mathcal{O}_\infty(G) \neq 1$, and consider the normal series $G \rhd N \rhd 1$.
	 Let $P \leqslant N$ be a non-trivial Sylow subgroup of $N$.
	 Since the Sylow $p$-subgroups are all conjugated when $p$ is fixed,
	 we can apply Proposition \ref{propPyb} with $\overline{H_1}=G/N$, and $\overline{H_2}=P$.
	 We obtain a solvable subgroup of $G$ of size at least $|\overline{H_1}| |\overline{H_2}| >|G:N|$.
	 This contradicts the hypotheses.
	 \end{proof}
	 
	 \begin{remark}
	  If $G$ is finite, then $\gamma_\infty(G)$ and $\mathcal{O}_\infty(G)$ are verbal subgroups
	  that coincide with some term of the lower central series, or derived series, of $G$.
	  This is not true for infinite groups, because nilpotent and solvable groups are only {\itshape pseudovarieties}, in the sense of~\cite{1976ES}.
	  \end{remark}
	
	\begin{proof}[Proof of Proposition \ref{propSolvNilpSec}]
	Let $N_0 \lhd G_0 \leqslant G$ be the given solvable section,
	and choose $G_1 \leqslant G_0$ as minimal, with respect to inclusion, with the property that $|G_1:\mathcal{O}_\infty(G_1)| \geq |G_0:N_0|$.
	Then, apply Lemma \ref{lemSRes} to $G_1$.
	The same argument works if $G_0/N_0$ is nilpotent, by using Lemma \ref{lemNRes}.
	\end{proof}
	
	For example, as a consequence of Proposition \ref{propSolvNilpSec}, we can improve an influential theorem of Dixon~\cite{1967Dix}:
	 
	 \begin{theorem} \label{thSym}
	Let $a(n)$ and $b(n)$ denote the largest orders of sections of the symmetric group on $n$ elements,
	which are solvable and nilpotent, respectively.
	Then $a(n) \leq 24^{(n-1)/3}$, and $b(n) \leq 2^{n-1}$.
	 \end{theorem}
	 
	 \end{subsection}
	 
	\end{section}

	\vspace{0.1cm}
	\begin{section}{The Theorem \ref{thMain}} \label{sec2}
	
	In this section, every group is finite.
	Let $w = w(x_1,...,x_n)$ be a word.
	We first observe that, under very mild assumptions on $w$,
	it is sufficient to prove that every weakly $w$-maximal {\itshape solvable} group is nilpotent.
	For this step we use the main result of J.S. Rose~\cite{1977Rose},
	 which describes the insolvable groups with a nilpotent maximal subgroup.
	 The next is clipped from~\cite[Theorem 1]{1977Rose}.
	
	\begin{theorem}[Rose] \label{thRose}
	Let $G$ be a finite non-solvable group with a nilpotent maximal subgroup $M$.
	If $S$ is the unique Sylow $2$-subgroup of $M$ and $Q$ is the unique $2$-complement of $M$, then $Z(Q) \leqslant Z(G)$.
	\end{theorem}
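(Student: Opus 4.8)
The plan is to reduce, using only the maximality of $M$, to a single centralizer dichotomy, and then to eliminate the bad branch by exploiting the non-solvability of $G$. First I record two elementary structural facts. Since $M$ is nilpotent it splits as $M = S \times Q$, whence $Z(M) = Z(S) \times Z(Q)$ and in particular $Z(Q) \leqslant Z(M)$. Moreover $M$ is non-normal: a nilpotent normal maximal subgroup has prime index (subgroups containing $M$ correspond to subgroups of $G/M$, so maximality forces $|G:M|$ prime), which would make $G$ solvable; hence $N_G(M) = M$.

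Because $Z(Q) \leqslant Z(M)$, every element of $Z(Q)$ is centralized by all of $M$, so $M \leqslant C_G(Z(Q))$, and by maximality $C_G(Z(Q)) \in \{M,G\}$. If $C_G(Z(Q)) = G$ then $Z(Q) \leqslant Z(G)$ and we are done; if $Q = 1$ then $Z(Q) = 1 \leqslant Z(G)$ trivially. So I assume $Q \neq 1$ and, for contradiction, $C_G(Z(Q)) = M$. Writing $M = \bigcap_{z \in Z(Q)} C_G(z)$, where each $C_G(z) \supseteq M$ is either $M$ or $G$ by maximality, at least one $z$ satisfies $C_G(z) = M$. Decomposing such a $z$ into its commuting prime-power parts (all of odd order, since $Q$ is a $2'$-group) and intersecting centralizers, I obtain a nontrivial $r$-element $w$ with $r$ odd and $C_G(w) = M$. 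Now $N_G(\langle w \rangle) \supseteq C_G(w) = M$, so $N_G(\langle w \rangle) \in \{M,G\}$; if it equals $G$ then $\langle w \rangle \lhd G$, forcing $M = C_G(w) = C_G(\langle w \rangle) \lhd G$, contrary to $N_G(M) = M$. Hence $N_G(\langle w \rangle) = C_G(w) = M$.

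The crux is to show that this last configuration is incompatible with non-solvability. Here $w$ is a $2'$-element whose centralizer is a self-normalizing nilpotent maximal subgroup, and no element of $G$ induces a nontrivial automorphism of $\langle w \rangle$. The natural tools are Thompson's theorem that a nilpotent maximal subgroup of odd order forces solvability, combined with local and fusion analysis at the prime $r$: letting $R$ be the Sylow $r$-subgroup of $M$, one has $w \in Z(R)$, and the aim is to prove that $R$ is Sylow in $G$ with $N_G(R) = C_G(R)$, so that Burnside's normal $r$-complement theorem produces a normal $r$-complement; peeling this off should yield a smaller non-solvable group with a nilpotent maximal subgroup of strictly smaller order, and an induction would drive the order of the odd part down until Thompson's theorem applies and delivers solvability, the desired contradiction.

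I expect this fusion step to be the main obstacle, and it is the reason the authors cite Rose rather than reprove the statement. The difficulty is precisely the control of $r$-fusion: $\langle w \rangle$ need not be characteristic in $R$, so one cannot conclude $N_G(R) \leqslant N_G(\langle w \rangle)$ directly, and ruling out the remaining $r$-local possibilities is exactly the genuine content of the classification of non-solvable groups with a nilpotent maximal subgroup. Once that structure is established — an explicit description of $G$ modulo its solvable radical on which $Q$ is forced to act trivially — the inclusion $Z(Q) \leqslant Z(G)$ follows immediately, so the real work lies entirely in the structural/local-analytic step rather than in the centralizer bookkeeping above.
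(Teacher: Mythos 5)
Your opening reductions are correct and cleanly executed: the splitting $M=S\times Q$ with $Z(Q)\leqslant Z(M)$, the observation that a normal nilpotent maximal subgroup would force $|G:M|$ prime and hence $G$ solvable, the dichotomy $C_G(Z(Q))\in\{M,G\}$, and the descent to a single odd-order $r$-element $w$ with $C_G(w)=N_G(\langle w\rangle)=M$ are all valid consequences of maximality. But at that point the proof stops: everything that makes the theorem a theorem is deferred to a hoped-for local analysis that you do not carry out, and you say so yourself. The paper does not prove this statement either --- it is explicitly ``clipped'' from Rose's 1977 paper and used as a black box --- so the honest comparison is that your attempt reproduces the easy bookkeeping that any treatment would begin with, and then points at, rather than supplies, the actual content (which in Rose's paper rests on Thompson's normal $p$-complement machinery and a genuine classification of the insoluble groups with nilpotent maximal subgroups).

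Beyond being incomplete, the one concrete route you sketch has a flaw worth flagging. You propose to show that the Sylow $r$-subgroup $R$ of $M$ is Sylow in $G$ with $N_G(R)=C_G(R)$ and then invoke Burnside's normal complement theorem. One can indeed get $N_G(R)\in\{M,G\}$ since $R$ is characteristic in $M$, and in the branch $N_G(R)=M$ one gets that $R$ is Sylow in $G$; but $N_G(R)=M$ centralizes $R$ only if $R$ is abelian, since $R$ is a direct factor of the nilpotent group $M$, so Burnside's criterion is not available for non-abelian $R$. Moreover the other branch $N_G(R)=G$ does not immediately contradict non-solvability (a non-solvable group may well have a nontrivial normal $r$-subgroup), so the dichotomy does not close. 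These are precisely the obstructions that force the transfer-theoretic and fusion arguments in Rose's and Thompson's papers, and without them the proposal does not constitute a proof; citing the source, as the paper does, is the appropriate resolution.
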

	 
	 We remark that Rose's proof does not require the classification of the finite simple groups~\cite{1994GLS}.
	
	\begin{proposition} \label{propFinalStep}
	  Fix a word $w$. Suppose that the following holds for every weakly $w$-maximal group $G$:
	  \begin{itemize}
	  \item $G/w(G)$ is solvable;
	  \item if $G$ is solvable, then it is nilpotent.
	  \end{itemize}
	  Then every weakly $w$-maximal group is nilpotent.
	  \end{proposition}
	  \begin{proof}
	Let $G$ be a weakly $w$-maximal group.
	Of course, it is sufficient to prove that $G$ is solvable.
	We work by induction on the order of $G$. If $w(G)=1$ we are done, so let $1 \neq w(G)<G$.
	Let $1 \neq N \leqslant w(G)$ be a minimal normal subgroup of $G$.
	Now $G/N$ is weakly $w$-maximal by Lemma \ref{lemQuot}, and then is solvable by induction.
	If $N$ is solvable we are done, so suppose this is not the case.
	Of course, there exists an odd prime $p$ which divides $|N|$. 
	Let $P<G$ be a Sylow $p$-subgroup of $G$.
	Then $P_0=P \cap N$ is a Sylow $p$-subgroup of $N$.
	Since $P_0<N$, the minimality of $N$ provides that $P_0$ is not normal in $G$.
	So there exists a maximal subgroup $M<G$ such that $N_G(P_0) \leqslant M$.
	Therefore, $P \leqslant M$.
	By the Frattini argument we have $G=N N_G(P_0) = NM=w(G)M$.
	Then (\ref{eqTop}) provides $|G:w(G)| \leq |M:w(M)|$.
	Therefore, it is easy to see that $M$ is weakly $w$-maximal:
	if $H \leqslant M < G$, then
	$$ |H:w(H)| \leq |G:w(G)| \leq |M:w(M)| . $$
	By induction, $M$ is solvable and so is nilpotent by assumption.
	With the notation of Theorem \ref{thRose}, we obtain $M=S \times Q$, with $Z(Q) \leqslant Z(G)$.
	Moreover, $P_0 \lhd P \leqslant Q$ because $p \neq 2$. 
	It follows that
	$$ 1 \neq P_0 \cap Z(P) \leqslant N \cap Z(Q) \leqslant N \cap Z(G) , $$
	where we also used that $P$ is a direct factor of $Q$.
	By the minimality of $N$ we obtain $N \leqslant Z(G)$, against the assumption that $N$ is not solvable.
	\end{proof}
	 
	We now explain the relations between the notions of (weak) $\gamma_n$-maximality and (weak) $\gamma_{n+1}$-maximality.
	 In particular, we remark that Theorem \ref{thMain} is really giving two different results.
	
	\begin{example}
	Let $n \geq 2$.
	It is trivial to see that every group of nilpotency class $n$ is $\gamma_{n+1}$-maximal,
	but not even weakly $\gamma_n$-maximal in general
	(for example, \texttt{SmallGroup(32,31)}~\cite{2011Gap} is of class $2$, has commutator subgroup of size $4$, and an abelian maximal subgroup).
	On the other hand, by Lemma \ref{lemNCTop}, every $\gamma_n$-maximal group is $\gamma_{n+1}$-maximal.
	Finally, there exist weakly $\gamma_2$-maximal groups which are not weakly $\gamma_3$-maximal
	(see Example \ref{exContro2} in the next subsection).
	\end{example}
	
	We will use the following theorem of T.M. Keller and Y. Yang~\cite{2018KY},
	which improves an older result of G. Glauberman~\cite{1975Glau}.
	
	\begin{theorem}[Theorem 1.1 in~\cite{2018KY}] \label{thKY}
	Let $p$ be a prime and let $V$ be a non-trivial elementary abelian $p$-group.
	Suppose $G$ is a solvable $p'$-group of automorphisms of $V$.
	Then $|G:\gamma_3(G)| < |V|$.
	\end{theorem}
	 
	 The strict inequality in Theorem \ref{thKY} (already present in~\cite[Proposition 1]{1975Glau}) follows from arithmetic considerations.
	 It is the reason behind the validity of Theorem \ref{thMain}, as we are about to see.
	 
	 \begin{proof}[Proof of Theorem \ref{thMain}]
	 Fix $n=2$ or $n=3$.
	 From Proposition \ref{propFinalStep} and Lemma \ref{lemDP},
	 it is sufficient to prove that every solvable weakly $\gamma_n$-maximal group is nilpotent.
	 Let $G$ be a counterexample of minimal order.
	From Lemma \ref{lemQuot} and the minimality of $G$, every quotient of $G$ is weakly $\gamma_n$-maximal and so nilpotent.
	Let $M<G$ be a non-normal maximal subgroup. If $M$ has a non-trivial normal core $C$,
	then $M/C \lhd G/C$ implies the contradiction $M \lhd G$.
	Now let $1 \neq N \lhd G$ be a minimal normal subgroup.
	Since $G$ is solvable, $N$ is an elementary abelian $p$-group for some prime $p$.
	Moreover, $NM=G$ and $N \cap M \lhd NM=G$ (because $N \cap M$ is normal in both $N$ and $M$).
	By the minimality of $N$ we obtain $N \cap M=1$, and so $G= N \rtimes M$.
	Also, as the centralizer $C_M(N)$ is contained in the (trivial) normal core of $M$, it follows that $M \leqslant \Aut(N)$.
	Furthermore, $M$ is a $p'$-group:
	it is nilpotent, and it is well known that in our situation we have $O_p(M)=1$ (see ~\cite[6.6.3(a)]{2003KS}).
	We also remark that $N$ is the unique minimal normal subgroup of the affine primitive group $G$,
	so that $N \leqslant \gamma_n(G)$.
	This gives $\gamma_n(G)=N \rtimes \gamma_n(M)$.
	Therefore $|G:\gamma_n(G)| = |M:\gamma_n(M)|$,
	 but applying Theorem \ref{thKY} we obtain $|M:\gamma_n(M)| <|N|$.
	Since $N$ is an abelian subgroup of $G$, this contradicts the fact that $G$ is weakly $\gamma_n$-maximal.
	 \end{proof}
	
	\begin{remark}
	 Bounding the order of the nilpotent {\itshape subgroups} of a certain class is not sufficient to obtain nilpotency.
	 In the case $w=\gamma_2$, the \texttt{SmallGroup(96,201)}~\cite{2011Gap} is a non-nilpotent group with abelianization of order $12$,
	 whose abelian subgroups have order at most $12$.
	 A related question is how, in general, $|H:\gamma_2(H)|$ behaves with respect to $|G:H|$.
	 We refer the reader to~\cite{2023Sab} for asymptotic results on this problem,
	  and some connections to representation theory.
	 \end{remark}

	\begin{subsection}{Large nilpotency class} \label{subsect2.2}
	
	Let $p$ be a prime and let $n \geq 2$. Let $N$ be the cyclic group of size $p^n$.
    Then $\Aut(N)$ is an abelian group of order $p^{n-1}(p-1)$ (see ~\cite[2.2.5]{2003KS}).
    Let $K$ be the Sylow $p$-subgroup of $\Aut(N)$, and consider $P = N \rtimes K$.
    In this subsection, we show that $P$ is a weakly $\gamma_2$-maximal group of class $n$.
    We will frequently write $P'$ in place of $\gamma_2(P)$.
    Moreover, given $x,y \in P$, we set $x^y:=y^{-1}xy$, and $[x,y]:=x^{-1}x^y$.
    
    \begin{lemma} \label{lemLow}
    $|P:P'|=p^n$, and $|\gamma_i(P):\gamma_{i+1}(P)|=p$ for every $i=2,...,n$.
    \end{lemma}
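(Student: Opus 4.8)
The plan is to fix a generator $t$ of $N \cong \mathbb{Z}/p^n\mathbb{Z}$ and track the lower central series explicitly. Identifying $\Aut(N)$ with $(\mathbb{Z}/p^n\mathbb{Z})^\times$, each $\kappa \in K$ acts as $t \mapsto t^{a_\kappa}$ for a unit $a_\kappa$, so that (with the paper's convention $[x,y]=x^{-1}x^y$) we have $[t^j,\kappa]=t^{j(a_\kappa-1)}$. The arithmetic input I would isolate first is the behaviour of the exponents $a_\kappa-1$: every $a_\kappa$ satisfies $a_\kappa\equiv 1 \pmod p$ — automatic for $p=2$ since all units are odd, and true for odd $p$ because $K$ is exactly the kernel of the reduction $(\mathbb{Z}/p^n\mathbb{Z})^\times \to (\mathbb{Z}/p\mathbb{Z})^\times$ — while $K$ contains an element $\kappa_0$ with $a_{\kappa_0}-1$ of $p$-adic valuation exactly $1$ (take $a_{\kappa_0}=1+p$ for odd $p$, and $a_{\kappa_0}=-1$ for $p=2$). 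Writing $v_p$ for the $p$-adic valuation, this says $\min_\kappa v_p(a_\kappa-1)=1$.

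Next I would compute $G'=\gamma_2(G)$. Since $N$ and $K$ are abelian and $G=NK$, the commutator subgroup is $[N,K]$, which is normal by Lemma \ref{lemHelp}(i). From $[t^j,\kappa]=t^{j(a_\kappa-1)}$ one gets $[N,K]=\langle t^d\rangle$ with $d=\gcd_\kappa(a_\kappa-1)$; by the previous paragraph $v_p(d)=1$, and since $t$ has $p$-power order this forces $\langle t^d\rangle=\langle t^p\rangle$. Hence $G'=\langle t^p\rangle$ has order $p^{n-1}$, and, using $|G|=|N|\,|K|=p^{2n-1}$, we obtain $|G:G'|=p^{2n-1}/p^{n-1}=p^n$.

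For the remaining claim I would prove by induction on $i$ that $\gamma_i(G)=\langle t^{p^{i-1}}\rangle$ for every $i\geq 2$. Given $\gamma_i=\langle t^{p^{i-1}}\rangle\leqslant N$, the fact that $N$ is abelian gives $[\gamma_i,N]=1$, so $\gamma_{i+1}=[\gamma_i,G]=[\gamma_i,K]$. Setting $s:=t^{p^{i-1}}$ we have $s^\kappa=s^{a_\kappa}$, and the same computation as above yields $[\langle s\rangle,K]=\langle s^{\gcd_\kappa(a_\kappa-1)}\rangle=\langle s^p\rangle=\langle t^{p^i}\rangle$, which closes the induction. Reading off orders, $\gamma_i$ is cyclic of order $p^{n-i+1}$ for $2\leq i\leq n+1$; thus $\gamma_{n+1}=1$, the class is exactly $n$, and $|\gamma_i:\gamma_{i+1}|=p$ for each $i=2,\dots,n$, as claimed.

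The only genuine subtlety, and the step I expect to be the main obstacle, is the prime $2$: there $\Aut(N)$ is itself a $2$-group, so $K=(\mathbb{Z}/2^n\mathbb{Z})^\times$ is not cyclic once $n\geq 3$, and the clean single-generator computation available for odd $p$ breaks down. The argument nevertheless goes through unchanged, because every step is driven only by the invariant $\min_\kappa v_p(a_\kappa-1)=1$ and not by a chosen generator of $K$: the element $a=-1$ already supplies a difference of valuation $1$, and taking the greatest common divisor over all $\kappa$ makes the non-cyclic case no harder than the cyclic one.
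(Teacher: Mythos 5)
Your proof is correct and follows essentially the same route as the paper: an explicit computation of $[N,K]$ and of the iterated commutators via the exponents through which $K$ acts on the cyclic group $N$. The only difference is organizational — where the paper splits into the cyclic case ($p$ odd, with a generator acting by $x\mapsto x^{p+1}$) and the non-cyclic case ($p=2$, using the single element $\sigma\tau$), you unify both by taking $\gcd_\kappa(a_\kappa-1)$ over all of $K$ and observing that its $p$-adic valuation is exactly $1$, which disposes of the non-cyclic $K$ for $p=2$, $n\geq 3$ without a separate computation.
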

    \begin{proof}
    We first assume $p>2$, or $p=n=2$.
    Let $x$ be a generator of $N$ and let $\sigma$ be a generator of $K$.
    Since $N$ and $K$ are abelian, we have $P'=[N,K]$, and $\gamma_i(P)=[\gamma_{i-1}(P),K]$ for every $i \geq 3$.
    By~\cite[2.2.6(a)]{2003KS}, we can assume $x^\sigma=x^{p+1}$.
    Then $[x^j,\sigma]= x^{-j} x^{(p+1)j}  = x^{pj}$ for every $j=0,...,p^n-1$.
    This implies that $[N,K]=\langle x^p \rangle$,
    and $\gamma_i(P)=[\gamma_{i-1}(P),K]=\langle x^{p^{i-1}} \rangle$ for every $i \geq 3$.
    Then $|N:\gamma_i(P)| = |\langle x \rangle : \langle x^{p^{i-1}} \rangle|=p^i$ for every $i=2,...,n+1$.\\
    If $p=2<n$ then $K$ is not cyclic,
    but contains elements $\sigma$ and $\tau$ such that $x^\sigma=x^5$ and $x^\tau=x^{-1}$~\cite[2.2.6(b)]{2003KS}.
    We remark that
    \begin{equation} \label{eqP2}
    [x,\sigma \tau] = [x,\tau] [x,\sigma]^\tau = x^{-2} x^{-4} = x^{-6} .
    \end{equation}
    This implies $P'= \langle x^6 \rangle = \langle x^2 \rangle$.
    Furthermore, if $y_1=x^2$ and $y_{i+1}=[y_i,\sigma \tau]$ for every $i \geq 1$,
    then $\gamma_{i}(P)=\langle y_i \rangle$, and the proof follows.
    \end{proof} 
    
   \begin{lemma}
   $P$ is weakly $\gamma_2$-maximal.
   \end{lemma}
   \begin{proof}
   We first assume $p>2$.
   Let $H \leqslant P$.
   We have to prove $|H:H'| \leq p^n$, so we can assume $|H| \geq p^{n+1}$.
   Then
   $$ |N \cap H|=\frac{|N||H|}{|NH|} \geq \frac{p^n \cdot p^{n+1}}{p^{2n-1}} = p^2 . $$
	Let $R=NH \cap K$. Since $N$ and $R$ are abelian, we have
	\begin{align*}
[N \cap H,R] & \> = \> 
[N \cap H,NR] \\ & \> = \> 
 [N \cap H,NH] \\ & \> = \> 
  [N \cap H,H] \\ & \> \leqslant \> 
  H' \> . 
\end{align*}
	We are going to estimate $|[N \cap H,R]|$.
	Let $x$ be a generator of $N \cap H$.
	As in the proof of Lemma \ref{lemLow}, let $\sigma$ be a generator of $K$.
	If $|K:R|=p^a$ for some $a \geq 0$, then $\sigma^{(p^a)}$ is a generator of $R$.
	Of course, $[N \cap H,R]$ contains the element
	$$ [x,\sigma^{(p^a)}] = x^{-1} x^{\sigma^{(p^a)}} = x^{-1} x^{(p+1)^{(p^a)}} . $$
	Now $(p+1)^{(p^a)} = \sum_{i=0}^{p^a} {p^a \choose i} p^i$,
	and ${p^a \choose i}$ is divisible by $p^a$ for each $1 \leq i < p^a$.
	It follows that
	$$ (p+1)^{(p^a)} \equiv p^{a+1} +1 \pmod {p^{a+2}} . $$
	This implies $\langle x^{p^{a+1}} \rangle \leqslant [N \cap H,R]$,
	and so $|N \cap H : [N \cap H,R]| \leq p^{a+1} = |K:R|p$.
	Hence we can write
	\begin{align*}
|H:H'| & \> \leq \> 
\frac{|H|}{|[N \cap H,R]|} \\ & \> = \> 
 \frac{|H|}{|N \cap H|} \cdot \frac{|N \cap H|}{|[N \cap H,R]|} \\ & \> \leq \> 
  |R| |K:R|p \\ & \> = \> 
  p^n \> . 
\end{align*}
The case $p=2$ is similar, by using (\ref{eqP2}).
   \end{proof}
	  
	  \begin{example} \label{exContro2}
	  The groups in this subsection also offer weakly $\gamma_2$-maximal groups that are not weakly $\gamma_3$-maximal.
	  Let $p$ be any prime, and choose $n=4$ and $P$ as in the previous construction.
	  Then $P \cong C_{p^4} \rtimes C_{p^3}$, and $|P:\gamma_3(P)|=p^5$.
	  On the other hand, it is easy to check that $P$ has a maximal subgroup of nilpotency class $2$.
	  \end{example}
	 
	\end{subsection}
	
	\end{section}

	\vspace{0.1cm}
	\begin{section}{Isologisms and $\gamma_2$-maximal groups} \label{sec3}
	 
	 \begin{subsection}{Isologisms} \label{SubSecIso}
	 
	 For the convenience of the reader,
	 let us explain some terminology from the theory of isologisms,
	 the standard references being~\cite{1940Hall,1940HallB,1989Hek}.
	Fix a word $w=w(x_1,...,x_n)$.
	In every group $G$, we have the word map
	$$ w : \> \overbrace{G \times ... \times G}^n \> \longrightarrow \> G . $$
	The {\itshape marginal subgroup} $m_w(G)$ for $w$ in $G$ is the set of elements $g \in G$ such that
	$$ w(x_1,...,x_n) \> = \> w(x_1,...,x_i g,...,x_n) \> = \> w(x_1,..., g x_i,...,x_n) $$
	for every $x_1,...,x_n \in G$ and every $i=1,...,n$.
	The prototype is $w=\gamma_2=[x_1,x_2]$, where $w(G)=G'$ and $m_w(G)=Z(G)$.
	We just write $m(G)=m_w(G)$ when $w$ is clear from the context.
	By definition of $m(G)$, the map $w$ induces a map
	$$ \theta_{w} : \> \overbrace{\frac{G}{m(G)} \times ... \times \frac{G}{m(G)}}^n \> \longrightarrow \> w(G) . $$
	Now, two groups $A$ and $B$ are said to be {\itshape $w$-isologic} if there exist isomorphisms
	$$ \phi : w(A) \rightarrow w(B) \hspace{1cm} \mbox{and} \hspace{1cm} \psi : A/m(A) \rightarrow B/m(B) $$
	which commute with $\theta_w$.
	Isologism is an equivalence relation, and divides the groups into families~\cite{1940HallB}.
	  The content of Theorem \ref{thIso} is: if two finite groups $A$ and $B$ are $w$-isologic,
	  then one among them is weakly $w$-maximal (resp. $w$-maximal) if and only if both are weakly $w$-maximal (resp. $w$-maximal).
	
	\begin{proof}[Proof of Theorem \ref{thIso}]
	Let $A$ and $B$ be $w$-isologic finite groups, and let $A$ be weakly $w$-maximal.
	Let $\phi: w(A) \rightarrow w(B)$ and $\psi: A/m(A) \rightarrow B/m(B)$ be the isomorphisms in the definition of $w$-isologic groups.
	By the definition of $m(B)$, for every $H \leqslant B$, we have $w(Hm(B))=w(H)$.
	 So, it is enough to prove the inequality $|H:w(H)| \leq |B:w(B)|$ for the subgroups $H$ that contain $m(B)$.
	Let $m(B) \leqslant H < B$.
	Moreover, let $m(A) \leqslant \overline{H} < A$ such that $\overline{H}/m(A) = \psi(H/m(B))$.
	We have
	\begin{align*}
|H:w(H)| & \> = \> 
\frac{|H/m(B)||m(B)|}{|w(H)|} \\ & \> = \> 
\frac{|\psi(H/m(B))| |m(B)|}{|\phi(w(H))|} \\ & \> = \> 
  \frac{|\overline{H}| |m(B)| }{|\phi(w(H))| |m(A)|} \> .
\end{align*}
	Since $\phi$ and $\psi$ commute with the map $\theta_w$, it is easy to check that $\phi(w(H))=w(\overline{H})$.
	This implies $|\overline{H}:\phi(w(H))| \leq |A:w(A)|$, and so from the previous equality we can write
	\begin{align*}
|H:w(H)| & \> \leq \> 
|A:w(A)| \cdot \frac{|m(B)|}{|m(A)|} \\ & \> = \> 
|A:m(A)| \cdot \frac{|m(B)|}{|w(A)|} \\ & \> = \> 
 |B:m(B)| \cdot \frac{|m(B)|}{|w(B)|} \\ & \> = \> 
  |B:w(B)| \> .
\end{align*}
	If $A$ is $w$-maximal, then the first inequality is strict, and so $B$ is $w$-maximal.
	\end{proof}
	 
	 Certainly, Theorem \ref{thIso} asks for a classification of $w$-maximal groups up to $w$-isologism.
	 We hope to stimulate further research in this direction.
	  
	 \end{subsection}

	\begin{subsection}{$\gamma_2$-maximal groups}
	
	We conclude this article with some questions regarding (weakly) $\gamma_2$-maximal finite groups.
	These have not been investigated since Thompson's paper~\cite{1969Thompson},
	and Theorem \ref{thIso} can help to understand their structure.
	After Theorem \ref{thMain}, we are really interested in the $p$-group case.
	We start with the following
	
	 \begin{question}
	Is the derived length of weakly $\gamma_2$-maximal finite groups bounded?
	 \end{question}
	  
	  If there exists some positive integer $n$ such that the derived length is at most $n$,
	  then every weakly $\gamma_2$-maximal finite group $G$ has an abelian section of size at least $|G|^{1/n}$ (which is located in the derived series).
	  In particular, by the weak $\gamma_2$-maximality we may have $|G:G'| \geq |G|^{1/n}$.
	
	\begin{question} \label{conj1}
	 Does there exist $\delta>0$ such that every weakly $\gamma_2$-maximal finite group $G$ satisfies $|G:G'| \geq |G|^\delta$?
	 \end{question}
	  
	  We point out that Question \ref{conj1} is a weak version of~\cite[Problem 14.76]{2022KM},
	  which asks for a large abelian section in every finite $p$-group.
	  The examples of Subsection \ref{subsect2.2} provide $|G:G'|=(|G| \cdot p)^{1/2}$,
	  so the $\delta$ in Question \ref{conj1} cannot be larger than $1/2$.
	  Since the center of a non-abelian group contains properly some abelian subgroup,
	  every $\gamma_2$-maximal finite group $G$ satisfies $|G:G'| > |G|^{1/2}$.
	  The known examples suggest the following
	
	\begin{question} \label{conj2}
	 Does there exist $\delta>1/2$ such that every $\gamma_2$-maximal finite group $G$ satisfies $|G:G'| \geq |G|^\delta$?
	 \end{question}
	  
	  We recall that $\gamma_2$-isologisms are called {\itshape isoclinisms}, and divide $p$-groups into families~\cite{1940Hall}.
	The smallest members of each family satisfy $Z(G) \leqslant G'$
	(these are the so-called {\itshape stem groups}, see~\cite[pag. 135]{1940Hall}).
	  Using Theorem \ref{thIso}, we can reduce all the three problems above to the case of stem groups,
	  and in particular to $Z(G)=G'$ for $\gamma_2$-maximal groups.
	  First, the derived length is invariant under isoclinism, because isoclinic groups have isomorphic commutator subgroups.
	  Moreover, suppose one has a positive answer to Question \ref{conj1},
	  for every weakly $\gamma_2$-maximal finite group $W$ with $Z(W) \leqslant W'$.
	   If $G$ is any weakly $\gamma_2$-maximal finite group which is isoclinic to $W$,
	  then
	  \begin{align*}
|G:G'| & \> = \> 
\frac{|G|}{|W'|} \\ & \> \geq \> 
|W:W'|  \\ & \> \geq \> 
|W'|^{\delta/(1-\delta)} \\ & \> = \> 
 |G'|^{\delta/(1-\delta)} \> ,
\end{align*}
	  which in turn means $|G:G'| \geq |G|^\delta$.
	  The same argument works for Question \ref{conj2}.
	 
	 \end{subsection}
	
	\end{section}

	\vspace{0.1cm}
\thebibliography{10}

\bibitem{1967Dix} J.D. Dixon,
   \textit{The Fitting subgroup of a linear solvable group},
	Journal of the Australian Mathematical Society \textbf{7} (1967), 417-424.
	
	\bibitem{1976ES} S. Eilenberg, M.P. Sch\"utzenberger,
   \textit{On pseudovarieties},
	Advances in Mathematics \textbf{19} (1976), 413-418.

\bibitem{1975Glau} G. Glauberman,
   \textit{On Burnside's other $p^\alpha q^\beta$ theorem},
	Pacific Journal of Mathematics \textbf{56 (2)} (1975), 469-476.
	
	\bibitem{2011GK} J. Gonz\'alez-S\'anchez, B. Klopsch,
    \textit{On w-maximal groups},
    Journal of Algebra \textbf{328} (2011), 155-166.
    
    \bibitem{2014GZ} J. Gonz\'alez-S\'anchez, A. Zugadi-Reizabal,
    \textit{A characterization of powerful $p$-groups},
    Israel Journal of Mathematics \textbf{202} (2014), 321-329.
    
    \bibitem{1994GLS} D. Gorenstein, R. Lyons, R. Solomon, 
    \textit{The Classification of the Finite Simple Groups},
  Mathematical Surveys and Monographs \textbf{40}, American Mathematical Society (1994).

 \bibitem{1940Hall} P. Hall,
   \textit{The classification of prime-power groups},
	Journal f\"ur die reine und angewandte Mathematik \textbf{182} (1940), 130-141.
	
	 \bibitem{1940HallB} P. Hall,
   \textit{Verbal and marginal sugroups},
	Journal f\"ur die reine und angewandte Mathematik \textbf{182} (1940), 156-157.
    
    \bibitem{1989Hek} N.S. Hekster,
    \textit{Varieties of groups and isologisms},
    Journal of the Australian Mathematical Society \textbf{46} (1989), 22-60.
	
	\bibitem{2018KY} T.M. Keller, Y. Yang,
    \textit{Class $2$ quotients of solvable linear groups},
    Journal of Algebra \textbf{509} (2018), 386-396.
	
	\bibitem{2022KM} E.I. Khukhro, V.D. Mazurov,
   \textit{Unsolved Problems in Group Theory: The Kourovka Notebook} \textbf{20},
	Sobolev institute of Mathematics (2022).
 
	\bibitem{2003KS} H. Kurzweil, B. Stellmacher,
   \textit{The Theory of Finite Groups: An Introduction},
   Universitext Springer (2003).
	
	\bibitem{1973Laffey} T.J. Laffey,
   \textit{The minimum number of generators of a finite $p$-group},
	Bulletin of the London Mathematical Society \textbf{5} (1973), 288-290.
	
	\bibitem{1999Mann} A. Mann,
   \textit{Some questions about $p$-groups},
	Journal of the Australian Mathematical Society \textbf{67 (A)} (1999), 356-379.
	
	\bibitem{1997Pyber} L. Pyber,
	\textit{How abelian is a finite group?},
 in The Mathematics of Paul Erd\H{o}s, I, 372-384,
 Algorithms Combin., 13, Springer, Berlin, 1997.
	
	\bibitem{1977Rose} J.S. Rose,
    \textit{On finite insoluble groups with nilpotent maximal subgroups},
	 Journal of Algebra \textbf{48} (1977), 182-196.
	
	\bibitem{2022Sab} L. Sabatini,
	\textit{Nilpotent subgroups of class $2$ in finite groups},
	 Proceedings of the American Mathematical Society \textbf{150 (8)} (2022), 3241-3244.
	 
	\bibitem{2023Sab} L. Sabatini,
	\textit{The growth of abelian sections},
	 Annali di Matematica Pura ed Applicata \textbf{202 (3)} (2023), 1197-1216.
	 
	  \bibitem{2011Gap} The GAP Group,
	  \textit{Groups, Algorithms, and Programming, Version 4.11.1},
	 https://www.gap-system.org, 2011.
	
	 \bibitem{1969Thompson} J. Thompson,
    \textit{A replacement theorem for $p$-groups and a conjecture},
	 Journal of Algebra \textbf{13} (1969), 149-151.
	
	\vspace{0.1cm}

\end{document}